\DeclareSymbolFont{cyrletters}{OT2}{wncyr}{m}{n}
\DeclareMathSymbol{\Sha}{\mathalpha}{cyrletters}{"58}
\DeclareMathSymbol{\rde}{\mathalpha}{cyrletters}{"64}
\def\cC{{\mathcal C}}
\def\cO{{\mathcal O}}
\def\ord{{\rm ord}}
\def\N{{\bf N}}
\def\R{{\bf R}}
\def\T{{\bf T}}
\def\T{{\bf T}}
\def\bull{\vrule height .9ex width .8ex depth -.1ex }
\def\textem#1{{\em #1\/}}
\long\def\textem#1{{\em #1\/}}
\newcounter{thenum}
\def\texttheo{\relax}
\newenvironment{theorem}{\medbreak\refstepcounter{thenum}
\noindent\textsc{Theorem} %
\thethenum. \texttheo ---  \it  }{\rm }
\newenvironment{e-proposition}{\medbreak\refstepcounter{thenum}
\noindent\textsc{Proposition} \thethenum. ---  \it  }{\rm }
\newenvironment{e-definition}{\medbreak\refstepcounter{thenum}
\noindent\textsc{Definition} \thethenum. ---  \it  }{\rm }
\newenvironment{lemma}{\medbreak\refstepcounter{thenum}\noindent{\it Lemma} %
\thethenum. ---  \it  }{\rm }
\newenvironment{e-rem}{\medbreak\refstepcounter{thenum}{}%
 \thethenum) }{}
\newenvironment{e-ex}{\medbreak\refstepcounter{thenum}{}%
 \thethenum) }{}
\newenvironment{proof}{\smallbreak\noindent{\sc Proof.} --- \rm}{\quad\bull\smallskip\rm}
\newenvironment{theoreme}{\medbreak\refstepcounter{thenum}\noindent\textsc{Théorème} %
\thethenum. \texttheo---  \it  }{\rm }
\newenvironment{definition}{\medbreak\refstepcounter{thenum}
\noindent\textsc{Définition} \thethenum. ---  \it  }{\rm }
\newenvironment{exemple}{\medbreak\refstepcounter{thenum}\noindent{\sl
    Exemple} %
\thethenum. --- }{}
\newenvironment{preuve}{\smallbreak\noindent{\sc Preuve.} ---
  \rm}{\quad\bull\smallskip\rm} 
\begin{document}

\begin{center}{\LARGE\parindent=0pt\selectlanguage{french} 
Un sous-système d'un système plat
  de dimension différentielle $2$ est plat
\vskip 1cm

\selectlanguage{english}
A subsystem of a flat system of differential dimension $2$ is flat.

}
\end{center}
\vskip2cm

\hbox to \hsize{\parindent =0pt\hbox to 2.5cm{\hfill}\hss\vbox{\hsize = 7cm {\large François \textsc{Ollivier}}
\bigskip

LIX, UMR CNRS 7161 

École polytechnique 

91128 Palaiseau \textsc{cedex}

France

\smallskip

{\scriptsize francois.ollivier@lix.polytechnique.fr}
} \hss\hss
\vbox{\hsize = 7cm {\large Brahim \textsc{Sadik}}
\bigskip

Département de Mathématiques 

Faculté des Sciences Semlalia 

B.P. 2390 Marrakech 

Maroc
\smallskip

{\scriptsize sadik@ucam.ac.ma}}\hss}
\vskip 0.3cm

\begin{center}\parindent =0pt Mai 2017
\end{center}
\vfill

{\small
\hbox to \hsize{\hss\vbox{\hsize= 6.2cm\selectlanguage{english}
\noindent \textbf{Abstract.} 
A subsystem of a flat system of
  differential dimension at most $2$ is flat. Furthermore, if such a flat
  system is stationary, we show that there exist
  flat outputs not depending on the time.
}
\hss\hss

\vbox{\hsize= 6.2cm\selectlanguage{french}
\noindent \textbf{Résumé.} 
Un sous-système de dimension différentielle
au plus $2$ d'une extension plate est plate. Si un tel système
plat est stationnaire, il
admet des sorties plates indépendantes du temps.
}\hss}}
\vfill\eject

\hbox to 0pt{}\vskip -1truecm
\selectlanguage{english}
\section*{Abridged English version}

\noindent The result that will be proved corresponds in control theory
to the fact that a system with $2$ controls which is linearizable by
exogenous feedback is linearizable by endogenous feedback or also that
any \textem{subsystem} of a \textem{flat system} is flat. We
refer to \cite{fliess,fliess3,Sira-Ramirez2004,Levine2009} for more details on flat
systems, a notion that goes back to Monge's problem
\cite{Monge1787,Hilbert1912,Cartan1915,Zervos1932}, and to
\cite{vinogradov,zharinov,fliess2} for the formalism of diffiety theory. We
will use 
Rouchon's lemma \cite{Rouchon1994,Ollivier1998}, following the
notations and definitions of \cite{OllivierSadik2006a}. A
\textem{diffiety extension}, or \textem{system}, $V/U$ will be a
diffiety with a projection $V\mapsto U$ that is a diffiety morphism
and a \textem{subsystem} $W/U$ of $V/U$ is such that
$\cO(W)\subset\cO(V)$. For brevity, we denote $\partial/\partial x$ by
$\partial_{x}$. 
\bigskip

A \textem{system of differential dimension} $m$
$V/U_{\delta_{0}}$ is an open subset of
$\R^{n}\times\left(\R^{\N}\right)^{m}\times U$ with a derivation
$\delta_{0}+\sum_{i=1}^{n}f_{i}(x,z,u)\partial_{x_{i}}+\sum_{j=1}^{m}\sum_{k=0}^{\infty}
z_{j}^{(k+1)}\partial_{z_{j}^{(k)}}$.

The \textem{trivial system of differential dimension $m$},
$T^{m}/U_{\delta_{0}}$, is $\left(\R^{\N}\right)^{m}\times U$ equipped
with the derivation $\delta_{0}+\sum_{j=1}^{m}\sum_{k=0}^{\infty}
z_{j}^{(k+1)}\partial_{z_{j}^{(k)}}$. We may then define flat systems.
\medskip

\begin{e-definition}\label{e-flatness} 
A system $V/U_{\ast}$ is \textem{parametrizable} if there exist a
diffiety extension $U/U_{\ast}$ and $\phi:W/U\mapsto
V\times_{U_{\ast}}U$ a morphism of extensions of $U$, where $W$ is an
open subset of $T^{\mu}/U$, such that $\mathrm{Im}\phi$ is dense in
$V\times_{U_{\ast}}U$.
\smallskip

A system $V/U$ is \textem{flat} if there exists a dense
  open set $W\subset V$ such that any $x\in W$ admits a neighbourhood
  $O$ isomorphic by $\phi: O\mapsto \tilde O$ to an open subset $\tilde
  O$ of $T^{m}\times U$; the functions $\phi^{\ast}(z_{j})$
  that generate $\cO(O)$ are called \textem{flat outputs}.
\end{e-definition}
\bigskip

By convention, $\ord_{z}A=-\infty$ if $A$ is free from $z$ and its
derivatives. In the case of a parametrizable diffiety, we identify a
function $x\in\cO(V)$ with the function
$x(z)=\phi^{\ast}(x)\in\cO(W)$.

Our theorem may be stated as follows. The
case $m=1$ is classical \cite{Charlet1989}. The proof will make a
repeated use of Rouchon's lemma \cite{Rouchon1994,Ollivier1998}, given
below.
\smallskip

\begin{theorem}\label{e-endo}
A parametrizable extension of differential dimension at most $2$
is flat.
\end{theorem}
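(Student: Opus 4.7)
The case $m \le 1$ being classical \cite{Charlet1989}, the plan concentrates on $m=2$. Let $\phi : W/U \to V \times_{U_*} U$ be a parametrization with $W$ open in $T^\mu/U$, and denote the parameters of $T^\mu$ by $z_1, \ldots, z_\mu$. Under the identification $x = \phi^{\ast}(x)$, every coordinate on $V$ is a differential function of $z_1, \ldots, z_\mu$. The goal is to produce two functions $y_1, y_2 \in \cO(V)$ whose pullbacks to $T^\mu$ satisfy the rank hypotheses of Rouchon's lemma, for then each $z_j$ can be expressed as a differential function of $(y_1, y_2)$ and $(y_1, y_2)$ is a flat output of $V$.

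The first case is $\mu = 2$, which is immediate: the parametrization is then a dominant morphism between diffieties of equal differential dimension, hence a local isomorphism on a dense open subset, and the $z_j$ themselves are flat outputs. The substantive case is $\mu > 2$. I would proceed by induction on $\mu$, aiming to replace any parametrization of differential dimension $2$ by $\mu$ parameters with one by $\mu - 1$ parameters, and iterate.

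The inductive step exploits the fact that the differential transcendence degree of $(z_1, \ldots, z_\mu)$ in $\cO(V)$ over $\cO(U_{*})$ is at most $2$, so at least $\mu - 2$ independent differential relations tie the $z_j$ together. Picking such a relation $P(z) = 0$ of minimal order and applying Rouchon's lemma under the appropriate Jacobian rank condition extracts one parameter, say $z_{j_{0}}$, as a differential function of the remaining ones, producing a factored parametrization through $T^{\mu-1}/U$. Iterating until $\mu = 2$ then reduces to the immediate case and yields flatness; the flat outputs emerge as the surviving pair of parameters after all reductions, viewed as elements of $\cO(V)$.

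The principal obstacle is to guarantee, at each iteration, that the chosen relation $P$ satisfies the Jacobian rank hypothesis required by Rouchon's lemma. In differential dimension one this is automatic after picking a minimal-order generator; in differential dimension two, degenerate configurations can occur where the candidate relation is in special position with respect to the derivation, and the rank drops. Handling these cases seems to require a delicate case split and a modification of the candidate parameters through differentiations, algebraic combinations, or order-shifts, until the rank condition is restored. Carrying out this combinatorial bookkeeping uniformly, and in a manner that genuinely exploits the bound $m \le 2$, is the technical heart of the argument.
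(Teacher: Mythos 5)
There are two fatal gaps. First, your base case $\mu=2$ is false as stated: a dominant morphism between diffieties of equal differential dimension is \emph{not} generically a local isomorphism. Take $V=T^{1}$ with coordinate $x$ and the parametrization $\phi^{\ast}(x)=z_{1}'$: the map is dominant, both sides have differential dimension $1$, yet no open subset is mapped isomorphically (the fibres are one-dimensional lines), and the flat output of $V$ is $x$ itself, not $z_{1}$. The finite-dimensional intuition (dominant and equidimensional implies generically \'etale) breaks down for diffieties; in the algebraic category the analogous statement in differential dimension $2$ is precisely the differential L\"uroth problem, to which \cite{Ollivier1998} gives a \emph{negative} answer. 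So the case $\mu=m$ is not the easy endpoint of a reduction: it already contains the whole difficulty of the theorem, namely producing flat outputs that are functions on $V$ when the given parameters are not invertible.

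Second, your inductive step has nothing to bite on: the $z_{j}$ are coordinates of the trivial system $T^{\mu}$ and are therefore differentially independent over $\cO(U)$, so there is no relation $P(z)=0$ to which Rouchon's lemma could be applied. The differential dimension of $V$ being $2$ means that the pulled-back functions $\phi^{\ast}(x_{i})\in\cO(W)$ satisfy many relations, not that the parameters do; Rouchon's lemma (th.~\ref{e-Rouchon}) is stated exactly for a relation $H(x)=0$ among functions $x_{i}\in\cO(V)$ and yields information on the derivatives $\partial_{z_{1}^{(r)}}x_{i}$, not an elimination of a $z_{j}$. Moreover, even if the parametrization could be factored through $T^{\mu-1}$ repeatedly, the surviving parameters would live in $\cO(W)$ with no reason to descend to $\cO(V)$, whereas flat outputs must belong to $\cO(V)$. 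The paper's actual induction is on a different pair: the number $n$ of state variables and the order $r=\max_{i}\ord_{z_{1}}x_{i}$ of the states in one parameter; the reduction step keeps the parametrization fixed and replaces the states $x_{i}$ by first integrals of a vector field whose existence follows from the affine structure of the state equations extracted from Rouchon's lemma and lem.~\ref{e-Rouchon2}. The ``delicate case split'' you defer is exactly that lemma together with cases i) and ii) of the paper's argument, so the proposal as written does not contain a proof.
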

\medskip

\begin{theorem}\label{e-Rouchon}
Let $V/U_{\ast}$ be a parametrizable diffiety extension,
$x_{i}\in\cO(V)$, $i=1\ldots n$ a family of nonconstant functions on
$V$, $H(x)=0$ a differential equation satisfied by the $x_{i}$ and
$e_{i}:=\ord_{x_{i}}H$.

Using the notations of def.~\ref{e-flatness}, let $z_{j}$ be
coordinates on $T^{\mu}$ and assume the $\max_{i=1}^{n}\ord_{z_{1}}
x_{i}=r>-\infty$. If $e_{i}=0$ $\Rightarrow$ $\ord_{z_{1}}
x_{i_{0}}<r$, then using the notation
$D:=\sum_{e_{i}>0}C_{i}\partial_{x_{i}^{(e_{i})}}$ where the $C_{i}$
are new variables with $DC_{1}=0$, the $n$-tuple $(\partial_{z_{1}^{(r)}}
x_{i}^{(e_{i})})$ is a solution of the equations
$\phi^{\ast}(D^{k}P)=0$.
\end{theorem}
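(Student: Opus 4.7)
The plan is to establish the identities $\phi^{\ast}(D^{k}H)=0$ (reading $P$ as $H$) by iterated differentiation of the pullback relation $\phi^{\ast}(H)=0$ with respect to top-order $z_{1}$-jet coordinates, combined with the commutation rule between $\partial/\partial z_{1}^{(r+\ell)}$ and the total derivation $\delta$. Write $y_{i}:=\partial x_{i}^{(e_{i})}/\partial z_{1}^{(r)}$ for the candidate substitution for $C_{i}$.

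The key computational tool is the commutation identity $[\partial_{z_{1}^{(r+\ell)}},\delta]=\partial_{z_{1}^{(r+\ell-1)}}$, which iterates to
\[
\partial_{z_{1}^{(r+\ell)}}\,\delta^{j}\;=\;\sum_{m=0}^{j}\binom{j}{m}\,\delta^{\,j-m}\,\partial_{z_{1}^{(r+\ell-m)}}.
\]
Applied to $x_{i}$, the bound $\ord_{z_{1}}(x_{i})\le r$ kills every term with $m<\ell$; in particular $\partial_{z_{1}^{(r+\ell)}}x_{i}^{(j)}=0$ for $j<\ell$, and for $j=\ell=e_{i}$ the only surviving contribution is $\partial x_{i}/\partial z_{1}^{(r)}$.

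For the base case $k=1$, I would differentiate $\phi^{\ast}(H)=0$ with respect to $z_{1}^{(r+\ell)}$ and apply the chain rule. The resulting family of identities, indexed by $\ell$, combines through the commutation formula into the stated substitution relation: the hypothesis $e_{i}=0\Rightarrow\ord_{z_{1}}(x_{i})<r$ is precisely what forces $y_{i}=0$ whenever $e_{i}=0$, so the surviving terms are indexed exactly by the $e_{i}>0$ slots appearing in the definition of $D$, and the combination reproduces $\phi^{\ast}(DH)$ evaluated at $C_{i}=y_{i}$.

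For the inductive step I would apply $\partial/\partial z_{1}^{(r+\ell)}$ once more to $\phi^{\ast}(D^{k}H)\big|_{C=y}=0$. The convention $DC_{i}=0$ enters in the symbolic step identifying the new iterated derivative with the multinomial expansion of $D^{k+1}H$: the $C_{i}$'s behave as constants of the derivation, so the freshly produced derivatives reach only the partial-derivative coefficients of $H$ and reassemble as $D^{k+1}H$. I expect the main obstacle to be the combinatorial matching of the binomial corrections coming from $[\partial_{z_{1}^{(r+\ell)}},\delta]$ with the multinomial coefficients of $D^{k+1}H$; here again the $e_{i}=0$ hypothesis keeps the bookkeeping clean by eliminating the lower-order contributions from $H$ that would otherwise spoil the identification.
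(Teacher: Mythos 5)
Your plan is essentially the paper's own proof: both arguments differentiate the identity $\phi^{\ast}(H)=0$ with respect to high-order jet coordinates of $z_{1}$ and identify the result, via the bound $\ord_{z_{1}}x_{i}\le r$, with $\phi^{\ast}(D^{k}H)$ evaluated at $C_{i}=\partial_{z_{1}^{(r+e_{i})}}x_{i}^{(e_{i})}=\partial_{z_{1}^{(r)}}x_{i}$; your commutator formula $[\partial_{z_{1}^{(r+\ell)}},\delta]=\partial_{z_{1}^{(r+\ell-1)}}$ and its binomial iterate are a correct (if heavier) route to the paper's one-line observation that $x_{i}^{(e_{i})}$ depends on the top jet variable affinely, with a coefficient that is itself independent of it. Two caveats. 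First, the ``combination over $\ell$'' that you defer as a combinatorial obstacle is exactly where the argument breaks for unequal $e_{i}$: the chain rule for $\partial_{z_{1}^{(r+\ell)}}$ also produces terms $(\partial_{z_{1}^{(r+\ell)}}x_{i}^{(j)})\,\partial_{x_{i}^{(j)}}H$ with $j>\ell$, which do \emph{not} vanish, so the identities obtained for different $\ell$ do not reassemble into $DH$. In the only situation where the lemma is used (first-order state equations, all $e_{i}\le 1$), a single differentiation by $z_{1}^{(r+1)}$, iterated $k$ times, yields the whole statement at once --- this is precisely the paper's two-line proof --- and you should restrict to that case rather than expect the general matching to work. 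Second, you misread the role of the hypothesis $e_{i}=0\Rightarrow\ord_{z_{1}}x_{i}<r$: the $e_{i}=0$ terms already drop automatically, since $\ord_{z_{1}}x_{i}\le r<r+\ell$; the hypothesis instead forces some $i_{0}$ with $e_{i_{0}}>0$ to attain order exactly $r$, so that the solution tuple is \emph{nontrivial} --- which is what the application in lem.~\ref{e-Rouchon2}~a) (projective dimension at least $0$) actually requires.
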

\begin{proof}
It suffices to remark that,
$\partial_{z_{1}^{(r+1)}}x_{i}^{(e_{i})}=\partial_{z_{1}^{(r)}}x_{i}$
for $e_{i}>0$,  so that
$\partial_{z_{1}^{(r+1)}}^{2}x_{i}=0$.
Substituting $\partial_{z_{1}^{(r+1)}}x_i$ to $C_{i}$ in $D^{k}P$, one
gets
$\partial_{z_{1}^{(r+1)}}^{k}P=0$.
\end{proof}
\medskip

From now on, we only consider differential dimension $2$.
We may complete some local coordinates on $V$, considered as functions
of the $z_{j}$ using the morphism $\phi$, to get local coordinates on
$W$, say by choosing $z_{i}, \ldots, z_{i}^{(s_{i})}$, $i=1,2$,
($s_{1}\le r$), and the $z_{j}^{(k)}$, $j>2$, $k\in\N$. We may then
express the derivations $\partial/\partial z_{1}^{(k)}$ in the $V$
coordinates. In this setting, for state equations defining $V$,
Rouchon's lemma means that $\partial_{z_{1}^{(r+1)}}$ and
$\partial_{z_{1}^{(r)}}=[\delta,\partial_{z_{1}^{(r+1)}}]$ do
commute. The next lemma goes one step further by considering 
$\partial_{z_{1}^{(r-1)}}=[\delta,\partial_{z_{1}^{(r)}}]$
(\textit{cf.} \cite{zharinov1996}).

\begin{lemma}\label{e-Rouchon2} Under the hypotheses of th.~\ref{e-Rouchon}, assume that
  the system $V/U$ is locally defined by explicit equations of order
  $1$.  $P_{i}:=x_{i}'-h_{i}(x,x_{1}',x_{2}',u)=0$, $2< i\le n$.

\noindent \textem{a)} If $V/U$ is parametrizable, the homogeneous
  ideal $(D^{k}P_{i}|1\le i\le n-2,\> k\in\N)$ is of projective dimension $0$,
  iff
  at least one of the equations $P_{i}$ is non linear in the derivatives
  $x_{i}'$.

\noindent \textem{b)} In this case, the state equations can be rewritten
$x_{i}'=f_{i}(x,v_{2},u)v_{1}+g_{i}(x,v_{2},u)$, where the $v_{i}$ are functions of
the $x_{i}$ and the $x_{j}'$, $j=1,2$, such that
$\partial_{z_{1}^{(r+1)}}v_{2}=0$, $f_{1}=1$ (\textem{i.e.}
$v_{1}=x_{1}'$) and $f_{2}=v_{2}$ if $f_{2}$ depends on $v_{2}$.

\noindent \textem{c)} We have moreover $\partial_{z_{1}^{(r)}}f_{i}(x,v_{2},u)=0$.
\end{lemma}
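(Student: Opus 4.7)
For part~(a), my plan is to compute $D P_i$ and $D^2 P_i$ explicitly from the form of the state equations. Direct differentiation yields $D P_i = C_i - C_1\,\partial_{x_1'} h_i - C_2\,\partial_{x_2'} h_i$ and
\[
D^2 P_i = -\bigl(C_1^2\,\partial_{x_1'}^2 h_i + 2 C_1 C_2\,\partial_{x_1'}\partial_{x_2'} h_i + C_2^2\,\partial_{x_2'}^2 h_i\bigr),
\]
while Rouchon's lemma guarantees that $(C_1,\ldots,C_n) = (\partial_r y_1,\ldots,\partial_r y_n)$, with $y_j := \phi^* x_j$, is a common zero of the ideal generated by all $D^k P_i$. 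If every $h_i$ is affine in $(x_1',x_2')$, then $D^k P_i = 0$ for $k \ge 2$, so the $n-2$ linear forms $D P_i$ cut out a projective line in $\mathbb{P}^{n-1}$, giving projective dimension $1$. Conversely, if some $h_{i_0}$ is non-affine, then $D^2 P_{i_0}$ is a non-trivial quadric in $(C_1,C_2)$ which intersects this line in a finite set, yielding projective dimension $0$.

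For part~(b), the zero-dimensional condition lets me express the ratio $(\partial_r y_1 : \partial_r y_2)$ as a function $\lambda(x,x_1',x_2',u)$ on $V$. Locally in the $(x_1',x_2')$-plane (with $x$, $u$ as parameters), I take $v_2(x,x_1',x_2',u)$ to be a first integral of the vector field $(\partial_r y_1)\partial_{x_1'} + (\partial_r y_2)\partial_{x_2'}$; existence follows from the classical ODE theorem. By construction $(\partial_{x_1'} v_2)(\partial_r y_1) + (\partial_{x_2'} v_2)(\partial_r y_2) = 0$, i.e.\ $\partial_{r+1} v_2 = 0$. Setting $v_1 := x_1'$ and changing coordinates $(x_1',x_2') \to (v_1,v_2)$, the vector field $\partial_{v_1}|_{v_2}$ becomes proportional to $(\partial_r y_1)\partial_{x_1'} + (\partial_r y_2)\partial_{x_2'}$; therefore the equation $D^2 P_i = 0$ at $(C_1,C_2) = (\partial_r y_1,\partial_r y_2)$ rewrites as $\partial_{v_1}^2 h_i = 0$, so $h_i$ is affine in $v_1$ at fixed $v_2$, giving the decomposition $h_i = g_i(x,v_2,u) + f_i(x,v_2,u)\,v_1$. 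The normalisations $f_1 = 1$ and (when $f_2$ depends effectively on $v_2$) $f_2 = v_2$ follow by post-composing $v_2$ with a diffeomorphism of its range.

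For part~(c), I first apply $\partial_{r+1}$ to the pulled-back identity $y_i' = f_i\,y_1' + g_i$. Since $\partial_{r+1} f_i = \partial_{r+1} g_i = 0$ (because $y$, $v_2$ and $u$ are independent of $z_1^{(r+1)}$) and $\partial_{r+1} y_j' = \partial_r y_j$ by Rouchon's commutation, this yields $\partial_r y_i = f_i\,\partial_r y_1$. I then apply $\partial_r$ to the original identity, expanding $\partial_r y_j' = \delta\,\partial_r y_j + \partial_{r-1} y_j$ via the commutator $[\partial_r,\delta] = \partial_{r-1}$ (precisely the ``one step further'' observation invoked before the lemma), and substituting the previous identity to eliminate $\delta\,\partial_r y_i$. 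This reduces $\partial_r P_i = 0$ to the relation
\[
(\partial_r f_i)\,y_1' + \partial_r g_i = (\delta f_i)(\partial_r y_1) + \partial_{r-1} y_i - f_i\,\partial_{r-1} y_1.
\]
Expanding $y_1' = A_1 + (\partial_r y_1)\,z_1^{(r+1)}$ and $\delta f_i = (\partial_r f_i)\,z_1^{(r+1)} + \mathrm{rest}$ makes the $z_1^{(r+1)}$-linear terms cancel identically. The main technical obstacle is to extract $\partial_r f_i = 0$ from the remaining identity: I plan to isolate coefficients of further jet variables such as $z_2^{(r+1)}$, and invoke the maximality of $r$ together with the parametrizability of $V$ to ensure the resulting linear factor is generically non-zero, forcing $\partial_r f_i = 0$ rather than merely an ambiguous relation among jets of $f_i$.
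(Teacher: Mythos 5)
Parts \emph{(a)} and \emph{(b)} of your argument are correct and essentially the paper's own route. For \emph{(a)} your explicit computation of $DP_i$ and $D^2P_i$ is exactly what the paper's terse remark amounts to: the $n-2$ independent linear forms $DP_i$ give a projective line, and any non-affine $h_{i_0}$ contributes a nonzero binary quadric in $(C_1,C_2)$ cutting it down to finitely many points, while Rouchon's theorem guarantees the solution set is nonempty. For \emph{(b)} you take $v_2$ to be an abstract first integral of $C_1\partial_{x_1'}+C_2\partial_{x_2'}$ where the paper makes the explicit choice $v_2=F:=C_2/C_1$ (or $v_2=x_2'-F(x)v_1$ when $F$ is free of $x_2'$); the two are equivalent. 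One point you should make explicit: passing from $D^2h_i=0$ (a \emph{constant-direction} second derivative) to $\partial_{v_1}^2h_i=0$ (the iterated derivative along a variable field) produces an extra term $(\partial_{v_1}(C_2/C_1))\,\partial_{x_2'}h_i$; it vanishes because $\partial_{z_1^{(r+1)}}C_i=0$ (partials with respect to distinct jet coordinates commute), so $C_2/C_1$ is itself a first integral and the integral curves are straight lines. Without this remark the reduction to an affine dependence on $v_1$ is not justified.

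Part \emph{(c)} is a genuine gap, and you say so yourself. Your identity
$(\partial_r f_i)\,y_1'+\partial_r g_i=(\delta f_i)(\partial_r y_1)+\partial_{r-1}y_i-f_i\,\partial_{r-1}y_1$
is correctly derived, but, as you observe, the coefficients of $z_1^{(r+1)}$ on the two sides cancel \emph{identically} (both equal $A\,\partial_r f_i$), so this relation carries no information at top order in $z_1$. Your proposed rescue --- isolating the coefficients of other top jet variables $z_j^{(k+1)}$ --- only yields proportionality relations of the form $(\partial_r f_i)\,\partial_{z_j^{(k)}}y_1=A\,\partial_{z_j^{(k)}}f_i$, which constrain the gradient of $f_i$ relative to that of $y_1$ but do not force $\partial_r f_i=0$; the appeal to ``maximality of $r$ and parametrizability'' is a hope, not an argument. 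The paper's proof of \emph{(c)} is of a different nature: it expresses the three derivations $\partial_{z_1^{(r+1)}}$, $\partial_{z_1^{(r)}}=[\delta,\partial_{z_1^{(r+1)}}]$ and $\partial_{z_1^{(r-1)}}=[\delta,\partial_{z_1^{(r)}}]$ as vector fields in coordinates adapted to $V$ (components on $\partial_{x_i}$, $\partial_{v_1}$, $\partial_{v_1'}$, $\partial_{v_2}$, \dots), observes that the $A'$-terms cancel in the second iteration so that the $\partial_{x_i}$-component of $\partial_{z_1^{(r-1)}}$ is $Af_i'$ up to terms of order at most $r$ in $z_1$, and then extracts $\partial_{z_1^{(r+1)}}f_i'=\partial_{z_1^{(r)}}f_i=0$ from $[\partial_{z_1^{(r+1)}},\partial_{z_1^{(r-1)}}]=0$. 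This uses the full multi-component bracket structure, which your purely scalar manipulation of the first-order state equations does not capture; you would need to redo \emph{(c)} along these lines (or find another mechanism) before the claim can be considered proved.
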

\begin{proof} \textem{a)} --- By th.~\ref{e-Rouchon}, the dimension is at least
  $1$. Now, if $P_{i}$ is non linear in $x_{i}'$, $i=1,2$, then a non
  trivial relation $D^{2}P_{i}=0$, so that the dimension is at most
  $1$. \textem{b)} --- Up to a permutation of indices, we may assume that
  $C_{1}\neq0$. Now, $C_{2}=F(x,x_{1}',x_{2}')C_{1}$ with
  $\partial_{z_{1}^{(r)}}F=0$. We take $v_{1}=x_{1}'$. If $F$ depends
  on $x_{2}'$, we choose $v_{2}=F$, $f_{2}(v_{2})=v_{2}$ and $g_{2}=x_{2}'-v_{2}$ or
  else $f_{2}(x)=F(x)$, $v_{2}=x_{2}'-f_{2}(x)v_{1}$
  and $g_{2}(v_{2})=v_{2}$. \textem{c)} --- We have
$\partial_{z_{1}^{(r+1)}}=A\partial_{v_{1}}+2A'\partial_{v_{1}'}+\cdots+B\partial
v_{2}'+\cdots$\ 
So
$\partial_{z_{1}^{(r)}}=\sum_{i=1}^{n}Af_{i}\partial_{x_{i}}+
A'\partial_{v_{1}}\cdots+B\partial_{v_{2}}+\cdots$  
Going one step further, we get, as terms in $A'$ cancel: 
$\partial_{z_{1}^{(r-1)}}=\sum_{i=1}^{n}\left(Af_{i}'+
\hbox{terms of order at most $r$ in $z_{1}$}\right)\partial_{x_{i}}+\cdots$
As $[\partial_{z_{1}^{(r+1)}},\partial_{z_{1}^{(r-1)}}]=0$, we need
  have $\partial_{z_{1}^{(r+1)}}f_{i}'=\partial_{z_{1}^{(r)}}f_{i}=0$. 
\end{proof}
\medskip

\noindent\textsc{Sketch of the proof of th.~\ref{e-endo}.} --- 
Denote by $z$, $v$, $u$ $u_{\ast}$ coordinate functions on $T^{\mu}$,
$V$, $U$, $U_{\ast}$. If the result is false, there exists an open
subset $O=\phi(\tilde O)\subset V$ such that no open subset
$O_{\prime}=\phi(\tilde O_{\prime})\subset O$ is isomorphic to an open
subset of $T^{m}$. We will look for a contradiction.
\smallskip

In the case $m=1$, we choose
$x_{i}$, $1\le i\le n$ such that the $x_{i}$ and their derivatives are
local coordinates of $O_{\prime}\subset O$. We assume moreover that
these function satisfy equations of order $1$
$x_{i}'=h_{i}(x, x_{1}', u_{\ast})$, $1<i\le n$ and that $n$ is
minimal. If $n=1$, $O_{\prime}$ is flat. 

If not, th.~\ref{e-Rouchon} implies that the $h_{i}$ are linear in
$x_{1}'$: $h_{i}=f_{i}x_{i}'+g_{i}$. Then, we may replace the $x_{i}$
by $n-1$ independent solutions $y(x,u_{\ast})$ of the differential
system $\partial_{x_{1}} Y+\sum_{i=2}^{n}f_{i}\partial_{x_{i}}
Y=0$. The derivatives $y_{i}'$ do not depend on $x_{1}'$, and
the $y_{i}$ satisfy a new system of order $1$ contradicting the
minimality of $n$.
\smallskip

In the case $m=2$, we also consider $x_{i}(z,u,u_{\ast})$, $1\le i\le n$,
that satisfy a system of order $1$ $x_{i}'=h_{i}(x, x_{1}',
x_{2}',u_{\ast})$, $2<i\le n$ and 
$\max_{i=1}^{n}\ord_{z_{1}} x_{i}=r>-\infty$. We assume that the
couple $(r, n)$ is minimal for lexicographic ordering. 

If $n=2$, $O_{\prime}$ is flat. If the $h_{i}$ do not depend on
$x_{1}'$ or $x_{2}'$, we are reduced to the case $m=1$, already
considered. We distinguish two cases.
\smallskip

i) If the $h_{i}$ are all linear in $x_{1}'$ and $x_{2}'$: 
$h_{i}=\sum_{j=1}^{2}f_{i,j}x_{j}'+g_{i}$,
$1\le i\le n$,  we
replace the $x_{i}$ by $n-1$ independent solutions $y(x, u_{\ast})$,
of the differential equation $\partial_{x_{1}}
Y+\sum_{i=3}^{n}f_{i,1}\partial_{x_{i}} Y=0$. The order
$\max_{i=1}^{n-1}\ord_{z_{1}} y_{i}$ is at most $r$. The $y_{i}'$ do
not depend on $x_{1}'$, so that they satisfy a system of order
$1$. If the $y_{i}'$ depend on $x_{1}$, they satisfy our hypotheses,
which contradicts the minimality of $n$. If not, we are reduced to the
case $m=1$ and we just have to complete the flat output for the
diffiety defined by the $y_{i}$ with $x_{1}$ to conclude.
\smallskip

ii) If the $h_{i}$ are not linear in $x_{1}'$ and $x_{2}'$, by
\textem{lem.}~\ref{e-Rouchon2} the state equations can be rewritten
$x_{i}'=f_{i}(x,v_{2},u_{\ast})*v_{1}+g_{i}(x,v_{2},u_{\ast})$, with
$\partial_{z_{1}^{(r+1)}}v_{2}=0$ and
$\partial_{z_{1}^{(r)}}f_{i}(x,v_{2},u)=0$.
We can replace the $x_{i}$ by 
$n-1$ independent solutions $y_{i}(x,v_{2},u_{\ast})$  of the
differential equation $\sum_{i=1}^{n}f_{i}\partial_{x_{i}}Y=0$,
completed with $v_{2}$ if the $f_{i}$ depend on $v_{2}$. The
$y_{i}'$ must depend on $x_{1}$; if not the $f_{i}$ must be constants,
the $y_{i}'$ satisfy linear equations so that the $h_{i}$ should have been
linear in the $x_{j}'$, $j=1,2$.

So, the $y_{i}$ (and $v_{2}$ if $f_{2}=v_{2}$) must satisfy a system of
order $1$ and, according to \textem{lem.}~\ref{e-Rouchon2} are of
order less than $r$ in $z_{1}$. A final contradiction that concludes
the proof.
\smallskip

\textit{E.g.}, the diffieties $U$ and $U_{\ast}$ may be respectively
$\R$, standing for the time variable $t$ with a derivation
$\partial/\partial t$ and  a single point with derivation $0$, if $V$
is associated to a stationary model. Then, the theorem asserts that
$V$ is flat with flat outputs not depending on the time, answering a
problem raised by Pereira Da Silva and Rouchon \cite{PereiraRouchon2004}. 

\selectlanguage{french}

\section*{Introduction}
\label{intro}

\noindent Si la notion mathématique remonte aux travaux de Monge
\cite{Monge1787} et a été étudiée au début du \textsc{xx}$^{\rm
  e}$~siècle par Hilbert \cite{Hilbert1912}, Cartan \cite{Cartan1915}
ou Zervos \cite{Zervos1932}, les systèmes plats ont été inventés sous
ce nom pour les besoins de l'automatique
\cite{fliess,fliess3,Levine2009}. Dans ce cadre, le résultat qui va
être prouvé en dimension différentielle au plus $2$, signifie qu'un
système linéarisable par bouclage exogène est linéarisable par
bouclage endogène. Sommairement, \textem{tout sous-système d'un
système pla est plat}, c'est-à-dire que si les solutions d'un
système d'EDO sont paramétrables par $m$ fonctions arbitraires, il
existe un tel paramétrage localement bijectif. Le cas $m=1$ est une
conséquence des résultat de Charlet \textit{et al.} \cite{Charlet1989}
ou dans le cas d'un paramétrage rationnel du théorème de Lüroth--Ritt,
mais qui n'a pas d'analogue en dimension différentielle
$2$ \cite{Ollivier1998}.

La définition de la platitude peut varier selon que l'on impose ou non
à un système stationnaire de posséder un paramétrage indépendant du
temps (\textit{cf.} Pereira da Silva et Rouchon
\cite{PereiraRouchon2004}). On montrera que, en dimension
différentielle au plus $2$, ces deux définitions coïncident,
c'est-à-dire que \textem{si un système stationnaire possède des sorties plates
dépendant du temps, il en existe d'autres indépendantes}.

\section{Diffiétés plates}

\noindent 
Pour la notion de diffiété \cite{vinogradov,zharinov}, nous adoptons les
conventions de \cite{OllivierSadik2006a}.
Soit $I$ un ensemble dénombrable, on appellera
\textem{diffiété} un ouvert $V$ de $\R^{I}$ pour la topologie la
plus grossière rendant pour tout $i_{0}\in I$ les projections
$\pi_{i_{0}}:(x_{i})_{i\in I}\mapsto x_{i_{0}}$ continues, muni d'une
dérivation $\delta=\sum_{i\in I} c_{i}(x){\partial\over\partial
x_{i}}$, où les $c_{i}$ appartiennent
à $\cO(V)$, l'anneau des applications $\cC^{\infty}$ de $V$ dans
$\R$ ne dépendant que d'un nombre \textem{fini} de
coordonnées. Par concision, $\partial/\partial x$ sera noté $\partial_{x}$.

Un \textem{morphisme de diffiétés} est une application
$\phi:V_{\delta_{1}}\mapsto V_{\delta_{2}}$, définie par des
fonctions $\cO(V)$ et telle que
$\delta_{1}\circ\phi^{\ast}=\phi^{\ast}\circ\delta_{2}$, où
$\phi^{\ast}:\cO(V_{2})\mapsto\cO(V_{1})$ est l'application duale de
$\phi$.

Une \textem{extension de diffiétés}, ou un \textem{système}, noté
$V/U$, est un couple de diffiétés muni d'une projection $\pi:V\mapsto
U$ surjective qui est un morphisme de diffiétés. Il s'agit donc d'un
fibré sur $U$, avec une projection compatible avec la structure de
diffiété.  Un morphime d'extensions $\phi:V_{1}/U\mapsto V_{2}/U$ est
un morphisme de $V_{1}$ dans $V_{2}$ tel que
$\pi_{2}\circ\phi=\pi_{1}$.  Un \textem{sous-système} $W/U$ de $V/U$
est une extension de $U$ telle que $\cO(W)\subset\cO(V)$. Soient $V/U$
et $W/U$ deux extensions de diffiétés, leur produit fibré est muni
d'une structure naturelle d'extension de $U$ (ainsi que de $V$ ou de
$W$), notée $V\times_{U} W$.

Un \textem{système de dimension différentielle} $m$
$V/U_{\delta_{0}}$ est un ouvert de
$\R^{n}\times\left(\R^{\N}\right)^{m}\times U$ muni d'une dérivation
de la forme $\delta_{0}+\sum_{i=1}^{n}f_{i}(x,z)\partial_{x_{i}}+\sum_{j=1}^{m}\sum_{k=0}^{\infty} 
z_{j}^{(k+1)}\partial_{z_{j}^{(k)}}$. 
\textem{L'extension triviale} de dimension différentielle $m$, que
l'on note $T^{m}/U_{\delta_{0}}$, est $\left(\R^{\N}\right)^{m}\times U$ muni
de la dérivation $$\delta_{0}+\sum_{i=1}^{m}\sum_{k=0}^{\infty}
z_{i}^{(k+1)}\partial_{z_{i}^{(k)}}.$$ 
\smallskip

\begin{definition}\label{platitude}
Un système $V/U_{\ast}$ est \textem{paramétrable} s'il existe
un système $U/U_{\ast}$ et $\phi:W/U\mapsto
V\times_{U_{\ast}}U$ un morphisme d'extension de $U$, où $W$  est un ouvert
de $T^{\mu}/U$, tel que $\mathrm{Im}\phi$ est dense dans
$V\times_{U_{\ast}}U$.

Un système $V/U$ sera dit \textem{plat} s'il existe un ouvert dense $W$ de $V$
tel que tout $x$ appartenant à $W$ admette un voisinage $O$ isomorphe
par $\phi: O\mapsto \tilde O$ à un
ouvert $\tilde O$ de \textem{l'extension triviale} $\T^{m}\times U$. Les
fonctions ${\phi^{\ast}}(z_{i})$, où les $z_{i}$ définissent
l'extension triviale, sont appelées \textem{sorties plates}.
\end{definition}
\smallskip

\begin{theoreme} \textem{(Endogène=exogène)}\label{endogene=exogene}
Une extension paramétrable de dimension différentielle au plus $2$
est plate.
\end{theoreme}
\bigskip

Les diffiétés $U_{\ast}$ et $U$ (déf.~\ref{platitude}) peuvent, par
exemple, faire intervenir une variable {\og}temps{\fg} $t$ avec
$t'=0$: si celui-ci n'apparaît pas dans $U_{\ast}$, il est absent des
sorties plates de $V/U_{\ast}$.

\section{Lemme de Rouchon et itération}

\noindent Par convention, $\ord_{z}A=-\infty$ si $A$ est indépendent
de $z$ et de ses dérivées.derivatives. Pour une diffiété paramétrable,
on identifiera la fonction $x\in\cO(V)$ avec la fonction
$x(z)=\phi^{\ast}(x)\in\cO(W)$.

\begin{theorem}\label{Rouchon}
Soit $V/U_{\ast}$ un système paramétrable,
$x_{i}\in\cO(V)$, $i=1\ldots n$ une famille de fonctions non constantes sur
$V$, $H(x)=0$ une équation différentielle satisfaite par les $x_{i}$ et
$e_{i}:=\ord_{x_{i}}H$.

Avec les notations de la déf.~\ref{platitude}, soient $z_{j}$ des
coordonnées sur $T^{\mu}$ telles que $\max_{i=1}^{n}\ord_{z_{1}}
x_{i}=r>-\infty$. Si $e_{i}=0$ $\Rightarrow$ $\ord_{z_{1}}
x_{i_{0}}<r$, alors notant
$D:=\sum_{e_{i}>0}C_{i}\partial_{x_{i}^{(e_{i})}}$ où les $C_{i}$
sont de nouvelles variables avec $DC_{1}=0$, le $n$-uplet $(\partial_{z_{1}^{(r)}}
x_{i}^{(e_{i})})$ est solution des équations
$\phi^{\ast}(D^{k}P)=0$.
\end{theorem}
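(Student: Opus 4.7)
The idea is to differentiate the relation $\phi^{\ast}(H)=0$ iteratively with respect to $z_{1}^{(r+1)}$ and to identify the resulting chain-rule expressions with iterated applications of $D$ to $H$, evaluated at $C_{i}:=\partial_{z_{1}^{(r+1)}}x_{i}^{(e_{i})}$ (which, by the identity below, coincides with the claimed tuple $\partial_{z_{1}^{(r)}}x_{i}^{(e_{i})}$ on the components where $e_{i}>0$).

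The first ingredient I would establish is the commutator identity $[\partial_{z_{1}^{(r+1)}},\delta]=\partial_{z_{1}^{(r)}}$ on the trivial extension $T^{\mu}/U$, which is immediate from the explicit form $\delta=\delta_{0}+\sum_{j,k}z_{j}^{(k+1)}\partial_{z_{j}^{(k)}}$. Combined with the bound $\ord_{z_{1}}x_{i}\le r$ (giving $\partial_{z_{1}^{(r+1)}}x_{i}=0$) and the hypothesis that $e_{i}=0\Rightarrow\ord_{z_{1}}x_{i}<r$, an induction on the order of differentiation yields the key identity $\partial_{z_{1}^{(r+1)}}x_{i}^{(e_{i})}=\partial_{z_{1}^{(r)}}x_{i}$ for every $i$ with $e_{i}>0$, together with the crucial auxiliary vanishing $\partial_{z_{1}^{(r+1)}}^{2}x_{i}^{(e_{i})}=0$, since $\partial_{z_{1}^{(r)}}x_{i}$ is itself of $z_{1}$-order at most $r$.

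Next I would apply $\partial_{z_{1}^{(r+1)}}$ to $\phi^{\ast}(H)=0$. By the chain rule and the vanishing identities above, only the top-order contributions survive, giving
\[
0=\partial_{z_{1}^{(r+1)}}\phi^{\ast}(H)=\sum_{e_{i}>0}\phi^{\ast}\!\bigl(\partial_{x_{i}^{(e_{i})}}H\bigr)\,\partial_{z_{1}^{(r+1)}}x_{i}^{(e_{i})},
\]
which is exactly $\phi^{\ast}(DH)$ after the substitution $C_{i}:=\partial_{z_{1}^{(r+1)}}x_{i}^{(e_{i})}$. The iteration then goes through: since $\partial_{z_{1}^{(r+1)}}^{2}x_{i}^{(e_{i})}=0$, the substituted $C_{i}$ act as $D$-constants under further differentiation (mirroring the hypothesis $DC_{i}=0$ on the formal symbols), so inductively $\partial_{z_{1}^{(r+1)}}^{k}\phi^{\ast}(H)=\phi^{\ast}(D^{k}H)$ after the substitution, and the left-hand side vanishes because one keeps differentiating zero.

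The main obstacle I expect is the second step: verifying that the chain rule actually collapses onto the top-order contribution. One must check that every intermediate derivative $x_{i}^{(k)}$ with $0<k<e_{i}$ contributes either zero, or a term that cancels after repeated use of the commutator $[\partial_{z_{1}^{(r+1)}},\delta]=\partial_{z_{1}^{(r)}}$; the hypothesis on the $e_{i}=0$ indices plays its role precisely by removing potential boundary terms that would otherwise obstruct this collapse. Once this clean identification is in hand, the remaining iteration is a purely formal consequence of the vanishing $\partial_{z_{1}^{(r+1)}}^{2}x_{i}^{(e_{i})}=0$.
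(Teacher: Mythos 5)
Your argument is essentially the paper's own proof: the commutator $[\partial_{z_{1}^{(r+1)}},\delta]=\partial_{z_{1}^{(r)}}$ gives $\partial_{z_{1}^{(r+1)}}x_{i}^{(e_{i})}=\partial_{z_{1}^{(r)}}x_{i}$ and hence $\partial_{z_{1}^{(r+1)}}^{2}x_{i}^{(e_{i})}=0$, and iterating $\partial_{z_{1}^{(r+1)}}$ on $\phi^{\ast}(H)=0$ is identified with $\phi^{\ast}(D^{k}H)$ under the substitution $C_{i}\mapsto\partial_{z_{1}^{(r+1)}}x_{i}^{(e_{i})}$, exactly as in the paper. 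The ``obstacle'' you flag is genuine in full generality --- for $e_{i}\ge 2$ the intermediate contributions $\phi^{\ast}(\partial_{x_{i}^{(k)}}H)\,\partial_{z_{1}^{(r+1)}}x_{i}^{(k)}$ with $0<k<e_{i}$ do not vanish and the key identity itself breaks down --- but the paper's one-line proof makes the same silent reduction and only ever applies the lemma to order-one equations, where all $e_{i}\le 1$ and the collapse is immediate.
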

\begin{preuve}
Il suffit de remarquer que
$\partial_{z_{1}^{(r+1)}}x_{i}^{(e_{i})}=\partial_{z_{1}^{(r)}}x_{i}$
pour $e_{i}>0$, de sorte que
$\partial_{z_{1}^{(r+1)}}^{2}x_{i}=0$.
Substituant $\partial_{z_{1}^{(r+1)}}x_i$ to $C_{i}$ dans $D^{k}P$, on
obtient $\partial_{z_{1}^{(r+1)}}^{k}P=0$.
\end{preuve}
\medskip

Nous nous limitons maintenant à la dimension différentielle $2$.
On peut compléter des coordonnées $x_{i}$ sur $V$, considérées comme
des fonctions des $z_{j}$ grâce au morphisme $\phi$, pour obtenir des
coordonnées locales sur
$W$, \textit{e.g.} en choisissant $z_{i}, \ldots, z_{i}^{(s_{i})}$, $i=1,2$,
($s_{1}\le r$), et les $z_{j}^{(k)}$, $j>2$, $k\in\N$. On peut alors
exprimer les dérivations $\partial_{z_{1}^{(k)}}$ dans les coordonnées
de $V$. De la sorte, pour des équations d'état définissant $V$,
le \textit{lemme} \ref{Rouchon} signifie que $\partial_{z_{1}^{(r+1)}}$ et
$\partial_{z_{1}^{(r)}}=[\delta,\partial_{z_{1}^{(r+1)}}]$
commutent. Le lemme suivant va un cran plus loin en considérant
$\partial_{z_{1}^{(r-1)}}=[\delta,\partial_{z_{1}^{(r)}}]$
(\textit{cf.} \cite{zharinov1996}).

\begin{lemma}\label{Rouchon2} Sous les hypothèses du
  th.~\ref{Rouchon}, supposons que
  le système $V/U$ est localement défini par des équations explicites
  d'ordre 
  $1$:  
\begin{equation}\label{state-eq}
P_{i}:=x_{i}'-h_{i}(x,x_{1}',x_{2}',u)=0,\quad 2< i\le n.
\end{equation}

\noindent \textem{a)} Si $V/U$ est paramétrable, l'idéal homogène
  $(D^{k}P_{i}|1\le i\le n-2,\> k\in\N)$ est de dimension projective $0$ ssi
  ssi l'une des équations $P_{i}$ est non linéaire en les dérivées
  $x_{i}'$.

\noindent \textem{b)} Alors, les équations (\ref{state-eq}) se réécrivent
$x_{i}'=f_{i}(x,v_{2},u)v_{1}+g_{i}(x,v_{2},u)$ $(2)$, où les $v_{i}$ sont
des fonctions des $x_{i}$ et des $x_{j}'$, $j=1,2$, telles que
$\partial_{z_{1}^{(r+1)}}v_{2}=0$, $f_{1}=1$ (\textem{i.e.}
$v_{1}=x_{1}'$) et $f_{2}=v_{2}$ si $f_{2}$ dépend de $v_{2}$.

\noindent \textem{c)} On a en outre $\partial_{z_{1}^{(r)}}f_{i}(x,v_{2},u)=0$.
\end{lemma}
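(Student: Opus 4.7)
The plan is to apply Theorem~\ref{Rouchon} iteratively. Set $C_j := \partial_{z_1^{(r)}} x_j$; the first iterate $DP_i = 0$ reads $C_i = \sum_{j=1}^{2} C_j \partial_{x_j'} h_i$ for $i > 2$, which linearly expresses $C_3,\ldots,C_n$ in terms of $C_1, C_2$. Thus the projective variety cut out by $\{DP_i\}$ alone is a projective line. For part~\textem{a)}, the higher iterates $D^k P_i$ ($k \ge 2$) give, after elimination of $C_3,\ldots,C_n$, homogeneous equations purely in $(C_1,C_2)$ whose coefficients are the partial derivatives $\partial_{x_1'}^j \partial_{x_2'}^{k-j} h_i$. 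Such an equation is non-trivial precisely when some $h_i$ fails to be affine in $(x_1', x_2')$, in which case it cuts the projective line to a finite subvariety (projective dimension $0$); conversely, if all $h_i$ are affine in $(x_1', x_2')$, all $D^k P_i$ vanish for $k \ge 2$ and the dimension remains $1$.

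For part~\textem{b)}, I would set $F := C_2/C_1$ (up to permutation of indices, so $C_1 \neq 0$). Applying $\partial_{z_1^{(r+1)}}$ and using the fact that partial derivatives in the $z_j^{(k)}$ commute together with $\partial_{z_1^{(r+1)}} x_j = 0$ (by maximality of $r$), one gets $\partial_{z_1^{(r+1)}} C_j = 0$, hence $\partial_{z_1^{(r+1)}} F = 0$. Since on $V$ with local coordinates $(x, x_1', x_2', u)$ the derivation $\partial_{z_1^{(r+1)}}$ restricts to $C_1 \partial_{x_1'} + C_2 \partial_{x_2'}$, this says $F$ is a first integral of the vector field $X := \partial_{x_1'} + F \partial_{x_2'}$. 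Then set $v_1 := x_1'$ and choose $v_2$ a first integral of $X$: take $v_2 := F$ if $F$ depends on $x_2'$, and $v_2 := x_2' - F(x) v_1$ otherwise (noting that $XF = 0$ forces $\partial_{x_1'} F = 0$ when $\partial_{x_2'} F = 0$). In these coordinates $\partial_{v_1} \tilde h_i = X h_i = \partial_{x_1'} h_i + F \partial_{x_2'} h_i = C_i/C_1$, and
\[
X^2 h_i = \partial_{x_1'}^2 h_i + 2F \partial_{x_1'}\partial_{x_2'} h_i + F^2 \partial_{x_2'}^2 h_i + (XF)\,\partial_{x_2'} h_i = 0,
\]
the first three terms vanishing by $D^2 P_i = 0$ evaluated at $(C_1, C_2) = (1, F)$, and the last by $XF = 0$. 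Hence $\tilde h_i$ is affine in $v_1$ with the required normalizations.

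For part~\textem{c)}, I would use the commutation $[\partial_{z_1^{(r+1)}}, \partial_{z_1^{(r-1)}}] = 0$. The relation $[\partial_{z_1^{(s+1)}}, \delta] = \partial_{z_1^{(s)}}$ forces the leading term of $\partial_{z_1^{(r+1)}}$ on $V$ to be $A\partial_{v_1}$ with $A = C_1$; one application gives $\partial_{z_1^{(r)}} = \sum_i A f_i \partial_{x_i} + A' \partial_{v_1} + \cdots$ (via $\delta x_i = f_i v_1 + g_i$), and a second application cancels the $A'$-terms, producing a $\partial_{x_i}$-coefficient in $\partial_{z_1^{(r-1)}}$ equal to $A f_i' + (\mbox{terms of order at most } r \mbox{ in } z_1)$. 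Matching the $\partial_{x_i}$-coefficient in $[\partial_{z_1^{(r+1)}}, \partial_{z_1^{(r-1)}}] = 0$ forces $\partial_{z_1^{(r+1)}} f_i' = 0$, which via $[\partial_{z_1^{(r+1)}}, \delta] = \partial_{z_1^{(r)}}$ and $\partial_{z_1^{(r+1)}} f_i = 0$ yields $\partial_{z_1^{(r)}} f_i = 0$.

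The main obstacle will be part~\textem{c)}: isolating the $\partial_{x_i}$-coefficient of maximal order in $z_1^{(r+1)}$ with enough care that the vanishing commutator extracts information about $f_i'$ rather than lower-order contributions. Parts a) and b), by contrast, reduce to algebraic manipulation, the crucial point being that Rouchon's iterated relation $D^2 P_i = 0$ is exactly what is needed to cancel the non-affine part of $X^2 h_i$.
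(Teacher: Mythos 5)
Your proposal is correct and follows essentially the same route as the paper: the same use of the iterated relations $DP_i=0$ and $D^2P_i=0$ to get part a) and the first integral $F=C_2/C_1$ of $X=\partial_{x_1'}+F\partial_{x_2'}$ for part b), and the same bracket computation with $\partial_{z_1^{(r-1)}}=[\delta,\partial_{z_1^{(r)}}]$ for part c). You merely make explicit two points the paper leaves implicit — that $\partial_{z_1^{(r+1)}}F=0$ (equivalently $XF=0$) follows from commuting the $z$-derivatives, and that $X^2h_i=0$ is exactly what makes the $\tilde h_i$ affine in $v_1$ after the change of variables — which is a welcome completion rather than a divergence.
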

\begin{preuve} \textem{a)} --- Par le th.~\ref{Rouchon}, la dimension
  est au moins $0$. Si $P_{i}$ est non linéaire en les $x_{i}'$,
  $i=1,2$, alors il existe une  équation non triviale
  $D^{2}P_{i}=0$, et la dimension est au plus $0$.
  \textem{b)} --- À permutation près, on peut supposer
  $C_{1}\neq0$. Alors, $C_{2}=F(x,x_{1}',x_{2}')C_{1}$ avec
  $\partial_{z_{1}^{(r)}}F=0$. Nous prenons $v_{1}=x_{1}'$. Si $F$ dépend
  de $x_{2}'$, on pose $v_{2}=F$, $f_{2}(v_{2})=v_{2}$ et
  $g_{2}=x_{2}'-v_{2}$ ou sinon
  $f_{2}(x)=F(x)$, $v_{2}=x_{2}'-f_{2}(x)v_{1}$
  et $g_{2}(v_{2})=v_{2}$. \textem{c)} --- Nous avons
$\partial_{z_{1}^{(r+1)}}=A\partial_{v_{1}}+2A'\partial_{v_{1}'}+\cdots+B\partial
v_{2}'+\cdots$\ 
Donc
$\partial_{z_{1}^{(r)}}=\sum_{i=1}^{n}Af_{i}\partial_{x_{i}}+
A'\partial_{v_{1}}\cdots+B\partial_{v_{2}}+\cdots$  
Au cran suivant, comme les terme en $A'$ s'annulent, on a: 
$\partial_{z_{1}^{(r-1)}}=\sum_{i=1}^{n}\left(Af_{i}'+
\hbox{des termes d'ordre au plus $r$ en $z_{1}$}\right)\partial_{x_{i}}+\cdots$
Comme $[\partial_{z_{1}^{(r+1)}},\partial_{z_{1}^{(r-1)}}]=0$, on doit
avoir, $\partial_{z_{1}^{(r+1)}}f_{i}'=\partial_{z_{1}^{(r)}}f_{i}=0$. 
\end{preuve}

\section{Mise en œuvre du lemme de Rouchon}

\noindent Soient $z$, $v$, $u$ $u_{\ast}$ des coordonnées sur
$T^{\mu}$, $V$, $U$, $U_{\ast}$. Si le théorème est faux, il existe un
ouvert $O=\phi(\tilde O)\subset V$ tel qu'aucun ouvert
$O_{\prime}=\phi(\tilde O_{\prime})\subset O$ n'est plat. Nous allons
chercher une contradiction.
\medskip

Dans le cas $m=1$, soient
$x_{i}$, $1\le i\le n$ des fonctions définissant avec leurs dérivées des
coordonnées locales de $O_{\prime}\subset O$. On suppose en outre
qu'elles satisfont des équations d'ordre $1$
$x_{i}'=h_{i}(x, x_{1}', u_{\ast})$, $1<i\le n$ et que $n$ est
minimal. Si $n=1$, $O_{\prime}$ est plat. 

Sinon, le th.~\ref{Rouchon} implique que les $h_{i}$ sont linéaires en
$x_{1}'$: $h_{i}=f_{i}x_{i}'+g_{i}$. On peut alors remplacer les $x_{i}$
par $n-1$ solutions indépendantes $y(x,u_{\ast})$ de l'équation 
$\partial_{x_{1}} Y+\sum_{i=2}^{n}f_{i}\partial_{x_{i}}
Y=0$. Les dérivées $y_{i}'$ ne dépendent pas de $x_{1}'$, de sorte que
les $y_{i}$ satisfont un nouveau système d'ordre $1$ contredisant la
minimalité de $n$.
\medskip

Dans le cas $m=2$, on considère aussi $x_{i}(z,u,u_{\ast})$, $1\le i\le n$,
qui satisfont un système d'ordre $1$ $x_{i}'=h_{i}(x, x_{1}',
x_{2}',u_{\ast})$, $2<i\le n$ and 
$\max_{i=1}^{n}\ord_{z_{1}} x_{i}=r>-\infty$. Nous supposons le couple
$(r, n)$ minimal pour l'ordre lexicographique.

Si $n=2$, $O_{\prime}$ est plat. Si les $h_{i}$ ne dépendent pas de
$x_{1}'$ ou $x_{2}'$, on se ramène au cas $m=1$, déjà traité.
On distingue deux situations.
\smallskip

i) Si les $h_{i}$ sont tous linéaires en $x_{1}'$ et $x_{2}'$: 
$h_{i}=\sum_{j=1}^{2}f_{i,j}x_{j}'+g_{i}$,
$1\le i\le n$, on remplace les $x_{i}$ par $n-1$ solutions
indépendantes $y(x, u_{\ast})$ de l'équation $\partial_{x_{1}}
Y+\sum_{i=3}^{n}f_{i,1}\partial_{x_{i}} Y=0$. L'ordre
$\max_{i=1}^{n-1}\ord_{z_{1}} y_{i}$ est au plus $r$. Les $y_{i}'$ ne
dépendent pas de $x_{1}'$, et satisfont donc un nouveau système
d'ordre $1$. Si les $y_{i}'$ dependent de $x_{1}$, ils satisfont nos
hypothèses, contredisant la minimalité de $n$. Sinon, on est ramené au cas
$m=1$ et il suffit de compléter la sortie plate de la diffiété définie
par les $y_{i}$ avec $x_{1}$ pour conclure.
\smallskip

ii) Si les $h_{i}$ ne sont pas linéaires en $x_{1}'$ et $x_{2}'$, par le
\textem{lem.}~\ref{Rouchon2} les équation d'état peuvent être réécrites
$x_{i}'=f_{i}(x,v_{2},u_{\ast})*v_{1}+g_{i}(x,v_{2},u_{\ast})$, avec
$\partial_{z_{1}^{(r+1)}}v_{2}=0$ et
$\partial_{z_{1}^{(r)}}f_{i}(x,v_{2},u)=0$.
On remplace les $x_{i}$ par 
$n-1$ solutions indépendantes $y_{i}(x,v_{2},u_{\ast})$ de l'équation
différentielle $\sum_{i=1}^{n}f_{i}\partial_{x_{i}}Y=0$,
cpomplétées avec $v_{2}$ si les $f_{i}$ dépendent de $v_{2}$. Les
$y_{i}'$ doivent dépendre de $x_{1}$; sinon les $f_{i}$ seraient des constantes,
et les $y_{i}'$ satisferaient un système linéaire de sorte que $h_{i}$
seraient aussi linéaires en les $x_{j}'$, $j=1,2$.

Donc, les $y_{i}$ (et  $v_{2}$ si $f_{2}=v_{2}$) engendrent
$\cO(O_{\prime})$, doivent satisfaire un système d'ordre $1$ et, selon
le \textem{lem.}~\ref{Rouchon2} sont d'ordre strictement inférieur à
$r$ en $z_{1}$: une contradiction finale qui achève la preuve.
\medskip

\begin{exemple} Considérons $V$ défini par un modèle de voiture
  $\theta'=u/d\tan \phi$, $x'=u\cos\theta$, $y'=u\sin\theta$ (\textit{cf.}
  \cite[(18)]{fliess}). Les diffiétés $U=U_{0}$ sont définies par
  $d'=0$. Soient les paramétrages $\phi$ exprimés par $\theta
  =\arctan(z_{2}'/z_{1}') \mp
  \arctan(z_{3}'/((z_{1}')^{2}+(z_{2}')^{2}-(z_{3}')^{2})^{1/2})+(1\mp
  1)\pi/2$, $x=z_{1}+z_{3}\sin\theta$ et $y=z_{2}-z_{3}\cos\theta$.
  Quand ${x_{3}'}^2$ tend vers ${x_{1}'}^{2}+{x_{2}'}^{2}$, les deux
  paramétrages tendent vers le lieu où $x'=y'=0$.  Une famille
  de sorties plates est donnée, \textit{e.g.}, par
  $B_{1}:=x+C\sin\theta$ et $B_{2}:=y-C\cos\theta$, $C\in\R$.

Tout choix de fonctions $B_{i}$ de $x$, $y$ et
  $\theta$ minimise $\epsilon(=2)$, mais pas nécessairement
  $e_{2}(\in\{0,1\})$. Si $e_{2}=1$, il peut être abaissé en
remplaçant $P_{1}$ par une solution de (\ref{state-eq}) (\textit{cf.} i)
\textit{supra}). Si $e_{2}=0$ et $s=1$, alors on obtient $s=0$
  en remplaçant $A_{2}$ par $P_{1}$ (\textit{cf.} ii) a) \textit{supra}).
\end{exemple}
\medskip

\begin{exemple} En prenant $U$ défini par
  $t'=1$ et en substituant $t$ à $z_{3}$ dans les formules de
  l'exemple précédent, on obtient bien de nouveaux paramétrages
  indépendants de $t$.
\end{exemple}

\section*{Conclusion}

\noindent
Nous espérons une adaptation de cette preuve dans le cadre de l'algèbre
différentielle ou en dimension différentielle supérieure à $2$, mais
de nombreuses difficultés se présentent que nous ne savons pas surmonter.


\begin{thebibliography}{00}

\bibitem{Cartan1915} É.~\textsc{Cartan}, \og Sur l'intégration
  de certains systèmes indéterminés d'équations différentielles»,
  \textit{Journal für die reine und angewandte Mathematik}, {\bf 145}, 86--91,
  1915. 

\bibitem{Charlet1989} B.~Charlet, J.~Lévine et R.~Marino, {\og}On
  dynamic feedback linearization{\fg}, 
  \textit{Systems \& Control Letters},  {\bf 13}, 143--151,
  North-Holland, 1989. 

\bibitem{fliess} M.~\textsc{Fliess}, J.~\textsc{Lévine},
Ph.~\textsc{Martin} et P.~\textsc{Rouchon}, \og Flatness
and defect of nonlinear systems: introductory theory and
applications\fg, \textit{Internat. J. Control}, 61, p. 1327--1887,
1995. 

\bibitem{fliess2} M.~\textsc{Fliess}, J.~\textsc{Lévine},
Ph.~\textsc{Martin} et P.~\textsc{Rouchon}, ``Deux applications de la
géométrie locale des diffiétés'', \textit{Annales de l'IHP, section
  A}, \textbf{66}, (3), 275--292, 1997.

\bibitem{fliess3} M.~\textsc{Fliess}, J.~\textsc{Lévine},
Ph.~\textsc{Martin} et P.~\textsc{Rouchon}, \og A
Lie-Bäcklund approach to equivalence and flatness of nonlinear
systems\fg, \textit{IEEE AC.} 44:922--937, 1999.

\bibitem{Hilbert1912} D.~\textsc{Hilbert}, \og Über den
  Begriff der Klasse von Differentialgleichungen\fg, {\it
  Math.~Annalen}, 73, 95--108, 1912.


\bibitem{Monge1787} G.~\textsc{Monge}, \og Supplément où l'on
  fait savoir\dots», {\it
  Histoire de l'Académie royale des sciences}, Paris, 502--576, 1787.

\bibitem{Ollivier1998} F.~\textsc{Ollivier}, {\og}Une réponse
négative au problème de Lüroth différentiel en dimension 2{\fg},
\textit{C.\ R.\ Acad.\ Sci.\ Paris}, t. 327, Série I. p. 881--886, 1998.

\bibitem{Levine2009} J.~Lévine, \textit{Analysis and Control of Nonlinear
  Systems: A Flatness-based Approach}, Springer, 2009. 

\bibitem{OllivierSadik2006a} F.~Ollivier et
B.~Sadik, {\og}La borne de Jacobi pour une
diffiété définie par un système quasi régulier{\fg},
\textem{Comptes rendus Mathématique}, {\bf 345}, 3, 139--144,
2007.

\bibitem{PereiraRouchon2004} P.S.~Pereira da Silva
  et P.~Rouchon, \og On time-invariant systems
  possessing time-dependent flat outputs\fg, actes de \textit{NOLCOS
  2004}, Elsevier, 2004.


\bibitem{Rouchon1994}  P.~Rouchon, {\og}Necessary
  condition and genericity of dynamic feedback linearization{\fg},
  \textit{Journal of Mathematical Systems Estimation and Control},
  Birkhaüser Boston, \textbf{4}, (2), 1--14, 1994.

\bibitem{Sira-Ramirez2004} H.~\textsc{Sira-Ramírez} and S.~\textsc{Agrawal},
  \textit{Differentially Flat Systems}, Marcel Dekker, New York, 2004.

\bibitem{vinogradov}
I.S.~\textsc{Krasil'shchik}, V.V.~\textsc{Lychagin} (V.V.) et
A.M.~\textsc{Vinogradov},
 \textit{Geometry of Jet Spaces and Nonlinear Partial Differential
Equations}, Gordon and Breach, New York, 1986.

\bibitem{zharinov} V.V.~\textsc{Zharinov},
\textit{Geometrical aspects of partial differential equations},
Series on Soviet and East European Mathematics, vol.~9, World
Scientific, Singapore, 1992.

\bibitem{zharinov1996} V.V.~\textsc{Zharinov}, ``On differentiations in
  differential algebras'', \textit{Integral Transforms and Special
    Functions}, \textbf{4}, (1,2), 163--180, 1996.

\bibitem{Zervos1932} P.~\textsc{Zervos}, \textem{Le problème
  de Monge},  Mémorial des Sciences Math., fasc.~LIII,
  Gauthier-Villars, Paris, 1932.

\end{thebibliography}
\end{document}